\newtheorem{theorem}{Theorem}
\newtheorem{lemma}[theorem]{Lemma}
\theoremstyle{definition}
\newtheorem{defn}[theorem]{Definition}
\theoremstyle{definition}
\newtheorem{remark}[theorem]{Remark}
\numberwithin{theorem}{section}
\def\BState{\State\hskip-\ALG@thistlm}
\title{Generic and Isometric Embeddings in Reservoir Computers}
\author{Allen G Hart}
\affil{University of Warwick, Coventry, CV4 7AL, UK}
\date{\today}
\begin{document}

\maketitle

\begin{abstract}
    We prove that a generic reservoir system admits a generalized synchronization that is a topological embedding of the input system's attractor. We also prove that for sufficiently high reservoir dimension (given by Nash's embedding theorem) there exists an isometric embedding generalized synchronization. The isometric embedding can be constructed explicitly when the reservoir system and source dynamics are linear. 
\end{abstract}

\section*{Lead Paragraph}

Reservoir computing is a machine learning paradigm that uses a high-dimensional dynamical system to model and predict the behavior of an underlying time-dependent source system that is only partially observed. A key feature of this framework is the \emph{generalized synchronization}, where the reservoir learns to mirror the evolution of the source system. In the special case that the generalized synchronization is an embedding, the synchronization can not only predict the future observations of the source system but also faithfully reconstructs the hidden topological structure of the underlying dynamics. In this work, we show that such embeddings occur generically in reservoir systems, providing rigorous justification for a phenomenon long observed in practice. Moreover, we establish conditions under which reservoirs can achieve \emph{isometric embeddings}, preserving not just the topology but also the geometric properties such as lengths and angles. These results demonstrate that reservoirs have the capacity to represent dynamical systems without distortion, opening the way to more reliable learning, analysis, and discovery of invariants in nonlinear time series.

\section{Introduction}

Reservoir computing is a machine learning paradigm designed to efficiently process temporal data using a high-dimensional dynamical system called the \emph{reservoir}, which transforms input signals into rich internal states. When a reservoir is driven by a sequence of observations from a dynamical system, the combined system can exhibit \emph{generalized synchronization} (GS) \cite{Hart2021, Hart2020, HartThesis2022, Hart2024, Grigoryeva2021, Grigoryeva2023, HaraKokubu2022, Hu2022, Kobayashi2021, Lu2017, Lu2018, Lymburn2019Reservoir, Nazerian2023, Wu2024, Verzelli2020, Chen2022, Ohkubo2024, Parlitz2024, Platt2021, Platt2022, Ruffini2018} where the reservoir state $\mathbf{x}_k$ becomes functionally dependent on the input system’s state $\mathbf{m}_k$ via a map $f$, i.e., $\mathbf{x}_k \approx f(\mathbf{m}_k)$.

In the special case where the GS map $f$ is an embedding, the topological properties of the underlying dynamical system are preserved and faithfully replicated in the reservoir \cite{Hart2020, Verzelli2020, Pathak2017a}. This enables not only the prediction of future observations, but also the recovery of geometric information about the original system such as the eigenvalues of fixed points, Lyapunov exponents, and homology groups \cite{Hart2020, Pathak2017a}. Empirically, when generalized synchronization occurs, it often looks like an embedding. So in this paper we will prove that, under suitable conditions (inspired by Whitney \cite{Whitney1936}) a generic reservoir map possesses a generalized synchronization $f$ which is an embedding. This result is closely related to the celebrated Takens embedding theorem \cite{Takens1981Detecting, Grigoryeva2023}. We also prove that for sufficiently high reservoir dimension (given by the bounds of Nash's embedding theorem) there exists an isometric embedding generalized synchronization. These embeddings additionally preserve the angles and lengths of the source dynamics.

\section{Generic Embeddings}

First, we will define a reservoir system.

\begin{defn}
    (Reservoir System) Let $M$ be a smooth $q$ dimensional manifold and $\phi \in \text{Diff}^1(M)$ a diffeomorphism on $M$. Let $\omega \in C^1(M,\mathbb{R})$ be a scalar observation function. Let $F \in C^1(\mathbb{R}^N \times \mathbb{R}, \mathbb{R}^N)$ denote the reservoir map. Then the reservoir system $\Phi : M \times \mathbb{R}^N \to M \times \mathbb{R}^N$ is defined
    \begin{align*}
        \Phi(m,x) = (\phi(m), F(x,\omega(m))).
    \end{align*}
\end{defn}

Under certain conditions, the reservoir system has a (not necessarily unique \cite{Grigoryeva2021},\cite{ceni2020echo}) associated generalised synchronisation.

\begin{defn}
    (Generalized Synchronization) Let $U \subset \mathbb{R}^N$ be an open set and $\Phi$ a reservoir system. The pair $(\Phi,U)$ is said to have the generalized synchronization $f \in C^1(M,\mathbb{R}^N)$ on the $\phi$-invariant compact set $\mathcal{A} \subset M$ if the graph
    \begin{align*}
        G = \{ (m,f(m)) \ | \ m \in \mathcal{A} \}
    \end{align*}
    is invariant under the evolution operator $\Phi$ i.e. $\Phi(G) = G$ and $\phi(\mathcal{A}) = \mathcal{A}$ and the graph $G$ is an attractor in the sense that there exists an $m_0 \in \mathcal{A}$ such that for any $x_0 \in U$
    \begin{align*}
        \Phi^k(m_0,x_0) \xrightarrow[k \to \infty]{} (\phi^k(m_0),f\phi^k(m_0)).
    \end{align*}
\end{defn}

\begin{defn}
    For a given open set $U \subset \mathbb{R}^N$ and reservoir system $\Phi : M \times \mathbb{R}^N \to M \times \mathbb{R}^N$ let $\mathcal{F} \subset C^1(\mathbb{R}^N \times \mathbb{R}, \mathbb{R})$ be the interior of the set of all reservoir maps $F$ such that $(\Phi,U)$ has a GS $f$.
\end{defn}

Next we introduce Whitney's 2nd embedding Theorem - a central result stating that if the reservoir dimension $N$ is greater than twice the manifold dimension $q$ then generic maps $f \in C^1(M,\mathbb{R}^N)$ are embedding. This lower bound on the dimension $N$ will be used to prove the major result of this section; Theorem \ref{thm::generic_F_embedding_f}.

\begin{theorem}[Whitney \cite{Whitney1936,Lee2013}]
    Let $M$ be a smooth manifold of dimension $q$. 
    If $N > 2q$, then a generic map in $C^1(M,\mathbb{R}^N)$ is an embedding. 
    Equivalently, the subset of embeddings $E \subset C^1(M,\mathbb{R}^N)$ is open and dense.
\end{theorem}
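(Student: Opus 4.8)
The plan is to establish the two halves of the claim separately --- that the set of embeddings $E$ is dense in $C^1(M,\mathbb{R}^N)$, and that it is open --- using Sard's theorem together with the Thom jet transversality theorem. Recall that $f$ is an embedding exactly when it is an immersion, is injective, and is a homeomorphism onto its image; when $M$ is compact the third property follows automatically from the first two, while for non-compact $M$ one works in the strong (Whitney) $C^1$ topology and additionally arranges that $f$ is proper. \textbf{Immersions.} Inside the $1$-jet bundle $J^1(M,\mathbb{R}^N)$, the locus of jets whose linear part has rank $r$ is a submanifold of codimension $(q-r)(N-r)$, which is smallest at $r=q-1$ where it equals $N-q+1$. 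Since $N>2q$ this codimension exceeds $q=\dim M$, so by jet transversality the maps $f$ whose $1$-jet extension $j^1 f$ misses every rank-deficient stratum form a residual set; as maximal rank of $df$ is moreover an open condition (uniform on compact pieces), $\operatorname{Imm}(M,\mathbb{R}^N)$ is open and dense. One may alternatively argue directly by adding linear perturbations $f+L$ in a chart and applying Sard to the map that records the rank drop.

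\textbf{Injectivity is dense.} This is the step for which $N>2q$, rather than merely $N\ge 2q$, is needed. Given an immersion $f$, cover $M$ by countably many coordinate balls $V_i$ with compact closure, choose subordinate bump functions $\rho_i$, and for each $i$ consider the localised perturbations $f_a=f+\rho_i a$ with $a\in\mathbb{R}^N$ small. Proceeding by induction, suppose $f$ is already an injective immersion on $\overline{V_1}\cup\dots\cup\overline{V_{i-1}}$; the parameters $a$ for which $f_a$ develops a self-intersection meeting $V_i$, or loses immersivity on $V_i$, form a Lebesgue-null subset of $\mathbb{R}^N$, because the governing incidence condition is controlled by the map $(x,y)\mapsto f(x)-f(y)$ on $(M\times M)\setminus\Delta$, a manifold of dimension $2q<N$, whose image misses almost every point of $\mathbb{R}^N$ by Sard's theorem, so $0$ is avoided for almost every $a$. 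Choosing a small good $a$ preserves the injectivity established so far and extends it over $\overline{V_i}$; iterating --- a finite process when $M$ is compact, and otherwise one passing to a limit in the strong $C^1$ topology --- produces an injective immersion arbitrarily $C^1$-close to $f$. Together with the previous paragraph this shows $E$ is dense.

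\textbf{Openness, and properness for non-compact $M$.} When $M$ is compact, an injective immersion is automatically a topological embedding (a continuous bijection from a compact space onto a Hausdorff space is a homeomorphism), and $E$ is open: if $g_n\to f\in E$ in $C^1$ with $g_n\notin E$, then $g_n$ is an immersion for large $n$, hence locally injective, so the failure of global injectivity yields witnesses $x_n\ne y_n$ with $g_n(x_n)=g_n(y_n)$; by compactness a subsequence satisfies $x_n,y_n\to x=y$ (using injectivity of $f$), but $g_n$ is a genuine embedding on a fixed small ball about $x$ for large $n$, being $C^1$-close there to the immersion $f$, a contradiction. For non-compact $M$ one repeats both arguments in the strong Whitney $C^1$ topology, in which $\operatorname{Emb}(M,\mathbb{R}^N)$ is open and the density argument above goes through verbatim, after one further small perturbation (growing toward the ends of $M$) that makes $f$ proper, thereby recovering the homeomorphism-onto-image property. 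Assembling the three steps, $E$ is open and dense, so a generic $f\in C^1(M,\mathbb{R}^N)$ is an embedding.

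I expect the immersion step (routine jet transversality) and the openness step (a compactness argument) to be straightforward. The genuine obstacle is the density of injectivity: making the chart-by-chart inductive perturbation precise, in particular verifying at each stage that the exceptional set of perturbation parameters is Lebesgue-null, and --- in the non-compact case --- organising the bookkeeping so that the countable sequence of localised perturbations converges in the strong topology to a map that is simultaneously an injective immersion and proper.
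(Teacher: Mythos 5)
The paper does not prove this statement: it is quoted as a classical result and attributed to Whitney (1936) and Lee (2013), so there is no in-paper argument to compare against. Your proof is the standard textbook one (essentially Hirsch, \emph{Differential Topology}, Ch.~2--3, or Lee's treatment of the Whitney approximation and embedding theorems): jet transversality for the density and openness of immersions, a chart-by-chart bump-function perturbation with the easy case of Sard's theorem ($\dim((M\times M)\setminus\Delta)=2q<N$) for density of injectivity, and a compactness argument for openness. Your observation that the statement as written is only literally correct for compact $M$ --- and otherwise requires the strong Whitney $C^1$ topology plus a properness perturbation --- is accurate and is in fact a point the paper glosses over; in the paper's application $f$ is only ever restricted to a compact attractor $\mathcal{A}$, so the compact case is what is actually used.

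The one place where your sketch is imprecise enough to count as a gap is the injectivity step. For the perturbation $f_a=f+\rho_i a$, the self-intersection condition is not $f(x)-f(y)=0$ but
\[
\bigl(\rho_i(x)-\rho_i(y)\bigr)\,a \;=\; f(y)-f(x),
\]
so the Sard/measure-zero argument applies to the map $(x,y)\mapsto \bigl(f(y)-f(x)\bigr)/\bigl(\rho_i(x)-\rho_i(y)\bigr)$ and only on the locus where $\rho_i(x)\neq\rho_i(y)$. Pairs with $\rho_i(x)=\rho_i(y)$ (in particular all pairs outside $\operatorname{supp}\rho_i$, and nearby pairs inside $V_i$) are simply not moved relative to one another by this perturbation, so no choice of $a$ helps there. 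The standard repair is exactly the bookkeeping you allude to but do not carry out: pairs with both points in a sufficiently small ball are separated because an immersion is locally injective (uniformly on compacta), pairs with both points outside $\operatorname{supp}\rho_i$ are covered by the inductive hypothesis, and the Sard argument is applied only to the remaining ``mixed'' pairs, where $\rho_i(x)-\rho_i(y)$ can be bounded away from zero after shrinking the charts. With that correction the argument is complete and standard; the rest (immersion density via the codimension count $N-q+1>q$, and openness of $E$ for compact $M$ via the subsequence argument) is sound.
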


Now we are ready to state and prove the major theorem - that for generic reservoir maps that have a GS, the associated GS is an embedding. The idea behind the proof is to define a continuous open mapping $\Psi : F \to f$ from the reservoir map to the GS $f$ and use the genericity of embeddings guaranteed by Whitney's theorem to obtain the genericity of the maps $F$. We will assume that the source system $\phi$ has the conditions necessary for Takens' Theorem to hold.

\begin{theorem} 
\label{thm::generic_F_embedding_f}
Suppose that $\phi \in \mathrm{Diff}^2(M)$ has finitely many periodic orbits in $\mathcal{A}$. Suppose that for each periodic orbit of $m \in \mathcal{A}$ with period $n \in \mathbb{N}$, the derivative $T_m \phi^{-n}$ has $q$ distinct eigenvalues. Suppose that $N > 2q$. Then for a generic observation function $\omega \in C^1(M,\mathbb{R})$ and generic reservoir map $F \in \mathcal{F}$, the generalized synchronization
\(
f \in C^1(M,\mathbb{R}^N)
\)
is an embedding of $\mathcal{A} \subset M$.
\end{theorem}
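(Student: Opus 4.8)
The plan is to realise the generalized synchronization $f$ as the image of the pair $(F,\omega)$ under a continuous open map and then pull Whitney's genericity statement back through it. Fix for the moment an observation function $\omega\in C^1(M,\mathbb{R})$ and consider the assignment $\Psi\colon\mathcal{F}\to C^1(M,\mathbb{R}^N)$, $F\mapsto f_F$, that sends a reservoir map to its generalized synchronization. The first task is to see that $\Psi$ is well defined: existence of $f_F$ holds by the definition of $\mathcal{F}$, and although a reservoir system need not have a unique GS in general, I expect membership in the \emph{interior} $\mathcal{F}$ to force the attracting graph to be uniformly contracted in the fibre direction, which both singles out $f_F$ and will be reused below.

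Second, I would show that $\Psi$ is continuous for the $C^1$ (strong) topologies. The mechanism is the uniform attraction of the graph: expressing $f_F$ via the backward-orbit composition of the fibre maps $x\mapsto F(x,\omega(\phi^{-k}(m)))$ and running a contraction-and-telescoping estimate on a tubular neighbourhood of the graph---or, more cleanly, invoking persistence and continuous dependence for uniformly attracting invariant graphs---should show that $f_F$ and its first derivatives depend continuously, indeed locally uniformly, on $F$ and its first derivatives. Continuity already gives that $\Psi^{-1}(E)$ is open in $\mathcal{F}$, where $E\subset C^1(M,\mathbb{R}^N)$ is the open and dense set of embeddings supplied by Whitney's theorem, which applies since $N>2q$.

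The substantive step, and the one I expect to be the main obstacle, is to show that $\Psi$ is an \emph{open} map; equivalently, that every sufficiently $C^1$-small perturbation $g$ of a GS $f_{F_0}$ is itself the GS of some reservoir map $F_g$ close to $F_0$. I would build $F_g$ by first prescribing its values on the compact set $S_g=\{(g(m),\omega(m)):m\in\mathcal{A}\}\subset\mathbb{R}^N\times\mathbb{R}$ via the invariance equation $F_g(g(m),\omega(m))=g(\phi(m))$, and then extending $F_g$ to all of $\mathbb{R}^N\times\mathbb{R}$ by a relative Whitney-type extension chosen so as to keep $F_g$ $C^1$-close to $F_0$ and to impose a uniform fibre-contraction on a tubular neighbourhood of the graph of $g$; the contraction makes the graph attracting in the sense of the GS definition and forces a whole $C^1$-neighbourhood of $F_g$ to retain a GS, so that $F_g\in\mathcal{F}$. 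The delicate point is the consistency of the prescription on $S_g$---that $(g(m),\omega(m))=(g(m'),\omega(m'))$ should imply $g(\phi(m))=g(\phi(m'))$---and it is here that the hypotheses on $\phi$ and the genericity of $\omega$ are indispensable: the finiteness of periodic orbits and the distinct-eigenvalue condition on $T_m\phi^{-n}$ are precisely the hypotheses of Takens' embedding theorem, which for generic $\omega$ both rules out degenerate observations (for which no GS could ever be an embedding, e.g.\ $\omega$ constant) and supplies the separation of orbit points in $\mathcal{A}$ that keeps the defining equation for $F_g$ consistent under perturbation of $g$.

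Granting that $\Psi$ is continuous and open, the conclusion is routine. For any nonempty open $V\subseteq\mathcal{F}$ the image $\Psi(V)$ is open and hence meets the dense set $E$, so $\Psi^{-1}(E)$ is open and dense in $\mathcal{F}$; that is the assertion for a fixed $\omega$. To obtain the theorem in the quantified form---over a generic $\omega$ and a generic $F\in\mathcal{F}$ simultaneously---I would rerun the two arguments for the joint map $(F,\omega)\mapsto f_{F,\omega}$ on $\mathcal{F}\times C^1(M,\mathbb{R})$, conclude that the preimage of $E$ is open and dense there, and then apply the Kuratowski--Ulam theorem (Fubini for Baire category): for $\omega$ in a residual set the slice $\{F\in\mathcal{F}:f_{F,\omega}\in E\}$ is open and dense in $\mathcal{F}$, which is exactly Theorem \ref{thm::generic_F_embedding_f}.
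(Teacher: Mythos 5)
Your proposal follows essentially the same route as the paper: realise $F\mapsto f$ as a continuous open map $\Psi$ into $C^1(M,\mathbb{R}^N)$, invoke Whitney's theorem for the open dense set of embeddings $E$, and pull $E$ back through $\Psi$ to get an open dense subset of $\mathcal{F}$, with the Takens hypotheses on $\phi$ and the genericity of $\omega$ entering exactly where you place them. Your sketches of the continuity and openness of $\Psi$ (via uniform fibre contraction and a Whitney-type extension satisfying the invariance equation) are in fact more detailed than the paper's corresponding lemmas, and your Kuratowski--Ulam step is a reasonable formalisation of the joint genericity over $(\omega,F)$ that the paper handles only implicitly.
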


\begin{proof}
Let 
\[
    S := \{ f \in C^1(M,\mathbb{R}^N) \mid f \text{ is a GS for some $F \in \mathcal{F}$ } \}.
\]
 The set of generalised synchronisations \(S\) is open and nonempty by Lemma \ref{lem::open_nonempty}. Furthermore, the set of embeddings \(E \subset C^1(M,\mathbb{R}^N)\) is an open and dense subset by Whitney's embedding Theorem \cite{Whitney1936,Lee2013}. Now it follows (by Lemma \ref{lem::topology}) that \(S \cap E\) is a dense open subset of \(S\). Now define the map $\Psi : \mathcal{F} \to S \subset C^1(M,\mathbb{R}^N)$ as the mapping of the reservoir map $F \in \mathcal{F}$ to its associated GS $f \in C^1(M,\mathbb{R}^N)$.
Now using that $\Psi$ is continuous and open (Lemma \ref{lem::open_and_cont}), and Lemma \ref{lem::topology2}, it follows that
$\Psi^{-1}(S \cap E)$
is a generic subset of \(\Psi^{-1}(S) = \mathcal{F} \), completing the proof.
\end{proof}

\begin{remark}
The periodic-orbit non-degeneracy assumption (distinct eigenvalues for each $T_m\phi^{-n}$) mirrors the standard hypotheses in Takens-type embedding theorems, ensuring transversality of delayed coordinates (see Huke’s formulation of Takens’ Theorem). The assumption is not very restrictive - and is in fact satisfied for generic evolution operators $\phi$. This was established independently by Kupka \cite{Kupka1963}\cite{Kupka1963} and \cite{Smale1963} Smale. In fact the original Takens embedding theorem \cite{Takens1981Detecting} did not spell out the periodic-orbit non-degeneracy assumptions in detail - and was shown by Takens to hold for a generic pair $(\phi,\omega)$.
\end{remark}

\begin{remark}
    Theorem \ref{thm::generic_F_embedding_f} is closely related to the result in \cite{Grigoryeva2023}, which shows almost all \emph{linear} reservoir systems reservoir produce a generalized synchronization that is generically an embedding of the attractor into the reservoir state space \cite{Grigoryeva2023}. We can view this result as a sort of special case of Theorem \ref{thm::generic_F_embedding_f} which holds for generally nonlinear systems.
\end{remark}

\begin{remark}
    The original Takens theorem \cite{Takens1981Detecting} is set on a compact manifold $M$, but the proof goes through using the same arguments when we are considering the restriction of the delay map to a compact attractor $\mathcal{A} \subset M$. Moreover, the dimension $N$ of the reservoir space $\mathbb{R}^N$ can be bounded below by some integer lower than $2q + 1$ if the attractor $\mathcal{A}$ has lower dimension than the manifold dimension. This sharper bound is outside the scope of this paper but is analysed in \cite{sauer1991embedology}.
\end{remark}

\section{Discussion}

Theorem~\ref{thm::generic_F_embedding_f} states that if the reservoir 
dimension $N$ is larger than twice the manifold dimension $q$ (following Whitney), then for 
a generic reservoir map $F \in \mathcal{F}$, the associated GS 
$f \in C^1(M,\mathbb{R}^N)$ is an embedding of the attractor 
$\mathcal{A} \subset M$. Hence, if we take a random sample from a 
non-singular distribution over $\mathcal{F}$ (or an open subset of 
$\mathcal{F}$), then we will almost surely obtain a reservoir map $F$ 
whose GS $f$ is an embedding.

In the reservoir computing paradigm, we typically generate a reservoir 
map $F$ by randomly drawing weights from a parameterized class. If this 
class has the universal approximation property—as is the case for 
Echo State Networks (ESNs)—then it has been observed empirically that, 
under suitable hyperparameter choices, almost all realizations of the 
random weights yield an embedding GS 
\cite{Haluszczynski2019,Gauthier2021,Wikner2021}. 
This empirical fact is explained precisely by 
Theorem~\ref{thm::generic_F_embedding_f}, together with the universal 
approximation property of ESNs 
\cite{GrigoryevaOrtega2018}.

The results in this section are set in discrete time, so it is natural to wonder if they hold in continuous time as well. The arguments in this paper directly use Takens' embedding theorem, which does not have a direct continuous-time analogue, so the problem is more complicated than simply repeating the same reasoning for vector fields. There is some analysis of embedding GS in continuous time in \cite{Hart2024}, \cite{wong2024contraction}, and it seems plausible that similar techniques can be used to prove the result.

\section{Isometric Embeddings}

Having established that, generically, an embedding GS exists, we now move on to consider under what conditions an isometric embedding GS exists.
An isometric embedding preserves both topological properties and geometrical properties, including lengths, angles, and curvatures.

\begin{defn}
    (Isometric embedding) Let $g$ be a Riemannian metric on the manifold $M$. A map $f \in C^1(M,\mathbb{R}^N)$ is an isometric embedding of $\mathcal{A} \subset M$ if it is an embedding of $\mathcal{A}$ and if
    \begin{align*}
        D_mf v \cdot D_mf w = g_m(v,w) \qquad \forall v,w \in T_m M, \quad \forall m \in \mathcal{A}.
    \end{align*}
\end{defn}

The celebrated Nash-embedding Theorem states that for large enough $N$ any manifold $M$ can be isometrically embedded into $\mathbb{R}^N$.

\begin{theorem}
\label{thm::nash_embedding}
    (Nash \cite{nash1954}\cite{delellis2016masterpieces}) Let $(M,g)$ be a Riemannian manifold of dimension $q$. Then for $N \geq 2q+1$
    there exists an isometric embedding $\iota \in C^1(M,\mathbb{R}^N)$.
\end{theorem}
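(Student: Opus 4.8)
This is the $C^1$ case of Nash's isometric embedding theorem (the Nash--Kuiper theorem), so the plan is to reduce it to the convex-integration h-principle and invoke the latter rather than reprove it. First I would manufacture an auxiliary embedding of the right dimension: since $N \ge 2q+1 > 2q$, Whitney's embedding theorem (stated above) guarantees that a generic map in $C^1(M,\mathbb{R}^N)$ is an embedding, so I fix one such embedding $h \in C^1(M,\mathbb{R}^N)$. The pullback $h^\ast\langle\cdot,\cdot\rangle$ of the Euclidean metric is a continuous Riemannian metric on $M$, but in general it is not equal to $g$.

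The second step is to rescale $h$ into a \emph{strictly short} embedding, meaning one for which $g_m - (h^\ast\langle\cdot,\cdot\rangle)_m$ is positive definite at every $m \in M$. If $M$ is compact this is immediate: compactness gives a constant $C > 1$ with $h^\ast\langle\cdot,\cdot\rangle \le C\,g$ pointwise, so $h_0 := C^{-1/2}\,h$ satisfies $h_0^\ast\langle\cdot,\cdot\rangle = C^{-1}\,h^\ast\langle\cdot,\cdot\rangle < g$. For non-compact $M$ one uses a positive smooth weight $\lambda \in C^\infty(M,(0,\infty))$ in place of the constant, or an exhaustion by compacta; this is routine but mildly fiddly.

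Third, I would apply the Nash--Kuiper theorem: every strictly short $C^1$ embedding of a $q$-manifold into $\mathbb{R}^n$ with $n \ge q+1$ can be $C^0$-approximated by a $C^1$ isometric embedding. Since $N \ge 2q+1 \ge q+1$, this applies to $h_0$ and produces $\iota \in C^1(M,\mathbb{R}^N)$ with $D_m\iota\, v \cdot D_m\iota\, w = g_m(v,w)$ for all $v,w \in T_m M$ and all $m \in M$, which is exactly the claim. Equivalently one may first settle the case $N = 2q+1$ and then postcompose with the standard linear inclusion $\mathbb{R}^{2q+1} \hookrightarrow \mathbb{R}^N$, which preserves the isometric-embedding property; note also that the $2q+1$ hypothesis is really only there to guarantee the existence of the starting embedding, since Nash--Kuiper itself needs merely codimension one.

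The genuinely hard step is the third: passing from a short embedding to an isometric one at fixed low codimension is the heart of Nash's corrugation/convex-integration argument, an infinite iteration of small $C^0$ perturbations whose $C^1$ norms are controlled so that the metric defect $g - \iota_k^\ast\langle\cdot,\cdot\rangle$ shrinks to zero while the $\iota_k$ converge in $C^1$. I would not reprove this; the theorem as stated is simply the composite of Whitney's embedding theorem with the Nash--Kuiper h-principle, which is why it is attributed to Nash and cited.
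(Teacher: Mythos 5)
The paper offers no proof of this statement at all: it is imported verbatim as Nash's $C^1$ isometric embedding theorem and justified purely by the citations to \cite{nash1954} and \cite{delellis2016masterpieces}. Your outline (Whitney embedding, rescale to a strictly short map, then invoke the Nash--Kuiper convex-integration step without reproving it) is the correct standard argument behind that citation, so you are in agreement with the paper's treatment and there is nothing to object to.
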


We will use Theorem \ref{thm::nash_embedding} to prove that for sufficiently large reservoir dimension $N$ there exists a reservoir map $F$ that has an isometric embedding GS.

\begin{theorem}
\label{thm:exist_isometric_GS}
Suppose that $\phi \in \mathrm{Diff}^2(M)$ has finitely many periodic orbits in $\mathcal{A}$. Suppose that for each periodic orbit $m \in \mathcal{A}$ with period $n \in \mathbb{N}$, the derivative $T_m \phi^{-n}$ has $q$ distinct eigenvalues.
Let $q=\dim M$ and suppose $N$ satisfies the Nash bound of Theorem~\ref{thm::nash_embedding}; $N \geq 2q + 1$.
Then for generic observation function $\omega \in C^1(M,\mathbb{R}^N)$ there exists a reservoir map $F \in \mathcal{F}$ whose generalized synchronization 
$\iota \in C^1(M,\mathbb{R}^N)$ is an isometric embedding of $\mathcal{A}\subset M$.
\end{theorem}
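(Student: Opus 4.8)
The plan is to combine Theorem~\ref{thm::generic_F_embedding_f} with Nash's theorem via the openness of the map $\Psi : \mathcal{F} \to S$. By Nash's Theorem~\ref{thm::nash_embedding}, since $N \geq 2q+1$ there exists an isometric embedding $\iota \in C^1(M,\mathbb{R}^N)$ of $(M,g)$, hence in particular of $\mathcal{A} \subset M$. The strategy is to show that $\iota$ (or a map agreeing with $\iota$ on a neighbourhood of $\mathcal{A}$) lies in the image $\Psi(\mathcal{F}) = S$, i.e. that there is a reservoir map $F \in \mathcal{F}$ whose GS is exactly $\iota$. The key structural fact I would use is that, given a target map $f$, one can try to \emph{engineer} a reservoir map $F$ that reproduces $f$ as its GS: the invariance condition $\Phi(G) = G$ unwinds to the functional equation $F\bigl(f(m),\omega(m)\bigr) = f(\phi(m))$ for all $m \in \mathcal{A}$, so it suffices to exhibit an $F \in C^1(\mathbb{R}^N \times \mathbb{R}, \mathbb{R}^N)$ satisfying this equation on the compact set $\{(f(m),\omega(m)) : m \in \mathcal{A}\}$ together with a contraction/echo-state property forcing the graph $G$ to be attracting on some open $U$.

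Concretely, here is the order of steps. First, fix the Nash isometric embedding $\iota$ and a generic observation $\omega$ such that the delay-type map $m \mapsto (\omega(m), \omega(\phi^{-1}(m)), \dots)$ separates points and is an immersion on $\mathcal{A}$ (possible by the Takens hypotheses, which are assumed, and the genericity of $\omega$). Second, note that because $\iota$ is injective on $\mathcal{A}$ and $(\iota,\omega)$ can be arranged so that the assignment $(\iota(m),\omega(m)) \mapsto \iota(\phi(m))$ is a well-defined $C^1$ map on the compact graph, one extends it by the Whitney extension theorem to a global $F \in C^1(\mathbb{R}^N \times \mathbb{R}, \mathbb{R}^N)$. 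Third, modify $F$ off a neighbourhood of the graph so that $F(\cdot, s)$ is a uniform contraction in its first argument — e.g. by taking $F(x,s) = \chi(x) F_0(x,s) + (1-\chi(x))\,\lambda x$ with $\chi$ a bump function supported near the relevant compact set and $\lambda$ small, arranged carefully so contraction holds globally — which gives the echo-state property and makes $G = \{(m,\iota(m))\}$ an attractor for some $m_0$ and all $x_0$ in a neighbourhood $U$. Fourth, check that this $F$ lies in the \emph{interior} $\mathcal{F}$ of the set of reservoir maps admitting a GS for $(\Phi,U)$: this follows because the contraction property, and hence existence of the GS, is stable under $C^1$-small perturbations of $F$. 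At this point $\iota \in S = \Psi(\mathcal{F})$, and since $\iota$ is an isometric embedding we are done; alternatively, invoke openness of $\Psi$ and the fact that isometric-embedding GSs form a nonempty set to conclude.

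The main obstacle I expect is the second and third steps together: arranging a single reservoir map $F$ that simultaneously (a) satisfies the exact functional equation $F(\iota(m),\omega(m)) = \iota(\phi(m))$ on $\mathcal{A}$, (b) is globally $C^1$, and (c) is a uniform contraction in $x$ so that the graph of $\iota$ is genuinely the attracting GS rather than just an invariant graph. The tension is that the functional equation pins down $F$ on the compact set $K = \{(\iota(m),\omega(m))\}$ where it may badly fail to be contracting (indeed $D_x F$ on $K$ is forced by $D\phi$ conjugated through $D\iota$), so one cannot ask for contraction \emph{on} $K$; instead one needs the echo-state property in the averaged / fading-memory sense, i.e. that compositions $F(\cdot,s_k)\circ\cdots\circ F(\cdot,s_1)$ contract along admissible input sequences $s_k = \omega(\phi^k(m))$. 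Establishing this requires controlling the product of the Jacobians $D_x F$ along orbits — essentially a negative-top-Lyapunov-exponent condition for the driven system — which is exactly where the genericity of $\omega$ and the freedom in choosing $F$ off $K$ must be spent. A cleaner route, which I would pursue first, is to sidestep the contraction issue by taking $N$ slightly larger and building $F$ as a delay-embedding-type reservoir (linear plus readout) for which the echo-state property is automatic and the GS is explicitly the delay map, then post-composing with a fixed $C^1$ diffeomorphism of $\mathbb{R}^N$ carrying the delay-reconstructed copy of $\mathcal{A}$ onto the Nash image $\iota(\mathcal{A})$ with the correct metric — reducing the problem to whether such a diffeomorphism exists, which it does since both are embedded $q$-submanifolds and the isometry condition only constrains the derivative along $\mathcal{A}$, leaving enough room (again by a Whitney-type extension) in the ambient $\mathbb{R}^N$.
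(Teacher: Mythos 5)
Your ``cleaner route'' --- the one you say you would pursue first --- is essentially the paper's own proof: the paper takes the embedding GS $f$ supplied by Theorem~\ref{thm::generic_F_embedding_f}, chooses a diffeomorphism $h$ on a neighbourhood of $f(M)$ with $h\circ\iota=f$, and conjugates the reservoir map, $F^*(x,z)=h^{-1}(F(h(x),z))$, so that the conjugated system has GS $\iota$. (Your phrasing ``post-composing with a diffeomorphism carrying the delay-reconstructed copy of $\mathcal{A}$ onto $\iota(\mathcal{A})$'' is exactly this conjugation; also note you do not need $N$ ``slightly larger,'' since $N\geq 2q+1$ already meets the Takens/Whitney bound, and the delay reservoir's companion matrix is nilpotent so the echo-state property is automatic, as you suspected.) Your primary route --- directly engineering $F$ from the functional equation $F(\iota(m),\omega(m))=\iota(\phi(m))$ plus a transverse contraction --- is not what the paper does for this theorem, but it is essentially the device used in Lemma~\ref{lem::open_nonempty} to prove openness of $S$, where the normal derivative is prescribed to be $\tfrac12$; the tension you identify (the tangential part of $D_xF$ along the graph is forced by $D\phi$ conjugated through $D\iota$, so attraction must come from the transverse directions and a fading-memory argument along orbits) is real and is glossed over there too. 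Both your route and the paper's share one unaddressed point: extending $\iota\circ f^{-1}$ from the compact set $f(\mathcal{A})$ (which need not be a submanifold if $\mathcal{A}$ is, say, a fractal attractor) to an ambient diffeomorphism is asserted rather than proved; neither argument is worse than the other on this score.
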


\begin{proof}
By Theorem~\ref{thm::nash_embedding}, there exists an isometric embedding $\iota : M \to \mathbb{R}^N$.
By Theorem~\ref{thm::generic_F_embedding_f}, there exists a reservoir map $F\in\mathcal{F}$ with embedding GS $f : M\to \mathbb{R}^N$.
Let $h$ be a diffeomorphism defined on a neighborhood of $f(M) \subset \mathbb{R}^N$ such that 
$h\circ \iota = f$ on $M$.
Define the conjugated reservoir map
\[
F^*(x,z) := h^{-1}\big(F(h(x),z)\big).
\]
Then the reservoir system
\begin{align*}
    \Phi^*(m,x) = (\phi(m),F^*(x,\omega(m)))
\end{align*}
has the generalized synchronization $\iota \in C^1(M,\mathbb{R}^N)$ which is an isometric embedding.
\end{proof}

\begin{remark}
In the proof we constructed a reservoir map $F^*$ by conjugating $F$ with the (local) diffeomorphism $h$. This is reminiscent of system isomorphism \cite{Tsuda1992} \cite{GrigoryevaOrtega2021} \cite{Grigoryeva2023}.
\end{remark}

\begin{defn}[System isomorphism \cite{Tsuda1992} \cite{GrigoryevaOrtega2021} \cite{Grigoryeva2023}]
\label{def:system_isomorphism}
Two reservoir maps $F, F^* \in C^1(\mathbb{R}^N \times \mathbb{R},\mathbb{R}^N)$ are said to be 
\emph{isomorphic} if there exists a diffeomorphism $h \in \text{Diff}^1(\mathbb{R}^N)$ such that
\[
    F^*(x,z) = h \, F(h^{-1}(x),z).
\]
\end{defn}

\section{Linear Reservoir Systems}

In the special case of the linear reservoir system where
\begin{align*}
    F(x,z) = Ax + Cz
\end{align*}
for $A \in \mathbb{R}^{N\times N}$ and $C \in \mathbb{R}^N$, with $\phi \in \text{Diff}^1(M)$ linear, and $\omega \in C^1(M,\mathbb{R})$, we can construct reservoir systems with isometric embedding generalized synchronizations explicitly. This simultaneous linearization of both the evolution operator $\phi \in \text{Diff}^1(M)$ and reservoir system arises in the vicinity of hyperbolic fixed points \cite{Hartman1960} \cite{Grobman1959}. Furthermore, we will show that any linear reservoir system that has an embedding GS is isomorphic to one that is isometrically embedding.

\begin{defn}[Linear system isomorphism]
\label{def:linear_system_isomorphism}
Let $F(x,z) = Ax + Cz$ and $F^*(x,z) = A^*x + C^* z$ be linear reservoir maps with 
$A,A^* \in \mathbb{R}^{N \times N}$ and $C,C^* \in \mathbb{R}^N$. 
We say that $F$ and $F^*$ are \emph{linearly isomorphic} if there exists an invertible 
matrix $H \in GL(N)$ such that
\[
    A^* = H A H^{-1}, 
    \qquad 
    C^* = H C.
\]
Equivalently, $F$ and $F^*$ are isomorphic in the sense of 
Definition~\ref{def:system_isomorphism} with $h(x)=Hx$.
\end{defn}

\begin{theorem}
Let $\phi \in \text{Diff}^1(M)$ be a linear map, and $\{ (\lambda_j, v_j) \}$ represent the eigenvalue-eigenvector pairs of $T_m \phi^{-1}$. Suppose the eigenvalues of $T_m\phi^{-1}$ are distinct and that $D_m\omega v_i \neq 0$. Suppose that $A \in \mathbb{R}^{q\times q}$ and $C \in \mathbb{R}^q$ satisfy
\begin{align*}
    \sum_{k=0}^{\infty} A^k C < \infty.
\end{align*}
Furthermore, suppose that the vectors
\begin{align*}
    \{(\mathbb{I} - \lambda_j A)^{-1}C\}_{j = 1}^q
\end{align*}
are linearly independent.
Let $\{ w_i \}$ be an orthonormal basis with respect to the Riemannian metric $g$. We can express these vectors with respect to the eigenbasis $\{v_i\}$ as follows
\begin{align*}
    w_i = \sum_{j=1}^q v_j Q_{ji}
\end{align*}
for a matrix $Q$.
Let $P$ be a matrix with $j$th column $D_m\omega v_j (\mathbb{I} - \lambda_j A)^{-1}C$. Then for any rotation matrix $R$ the reservoir map
\begin{align*}
    F^*(x,z) = A^* x + C^* z
\end{align*}
where
\begin{align*}
    A^* &= (PQR)^{-1} A P Q R \\
    C^* &= (PQR)^{-1} C
\end{align*}
is linearly isomorphic to $F$ and has generalized synchronization
\begin{align*}
    f^* = \sum_{k=0}^{\infty} A^{*k} C^* \omega\phi^{-k}
\end{align*}
    which is an isometric embedding.
\end{theorem}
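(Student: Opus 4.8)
The plan is to verify directly that $f^{*} = \sum_{k=0}^{\infty} A^{*k} C^{*}\, \omega\phi^{-k}$ is (i) well-defined, (ii) a generalized synchronization of the conjugated system $F^{*}$, and (iii) an isometric embedding, and then to observe that the conjugation identity $A^{*} = (PQR)^{-1} A (PQR)$, $C^{*} = (PQR)^{-1} C$ is exactly linear isomorphism with $H = (PQR)^{-1}$. First I would recall the known form of the GS for a linear reservoir driven by a linear source: since $\phi$ is linear, writing $m_{k} = \phi^{k}(m)$, the fixed point of the reservoir recursion $x_{k+1} = A x_{k} + C\omega(m_{k})$ along the orbit is $f(m) = \sum_{k=0}^{\infty} A^{k} C\, \omega(\phi^{-k}(m))$, convergent because $\sum_{k} A^{k} C < \infty$ (together with the contractivity that this summability encodes, which also gives the echo/attractor property). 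The conjugated map $F^{*}(x,z) = H F(H^{-1}x, z)$ then has GS $f^{*} = H f = \sum_{k} H A^{k} H^{-1} H C\, \omega\phi^{-k} = \sum_{k} A^{*k} C^{*}\, \omega\phi^{-k}$, so (i) and (ii) are immediate from linear-isomorphism invariance of the GS (this is the content of Definition~\ref{def:linear_system_isomorphism} combined with the linear-system version of the GS formula).

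The substantive step is (iii): showing $f^{*} = H f$ is an isometric embedding. Here I would first compute $f$ explicitly in the eigenbasis. Expanding $\omega(\phi^{-k}(m))$ to first order along $\mathcal{A}$ near the fixed point and using $T_m\phi^{-1} v_{j} = \lambda_{j} v_{j}$, the derivative $D_m f$ sends the eigenvector direction $v_{j}$ to $\sum_{k=0}^{\infty} A^{k} C\, (D_m\omega\, v_{j})\lambda_{j}^{k} = (D_m\omega\, v_{j})\,(\mathbb{I} - \lambda_{j} A)^{-1} C$, which is precisely the $j$th column of $P$. Thus $D_m f \circ (\text{eigenbasis}) = P$, and the linear-independence hypothesis on $\{(\mathbb{I} - \lambda_{j} A)^{-1} C\}$ (combined with $D_m\omega\, v_{j}\neq 0$) makes $P$ injective, recovering that $f$ is an immersion — and here one invokes Theorem~\ref{thm::generic_F_embedding_f} (or the Takens-type hypotheses already assumed: distinct eigenvalues, generic $\omega$) to upgrade this to an embedding of $\mathcal{A}$. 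Now change to the $g$-orthonormal basis $\{w_{i}\}$ via $w_{i} = \sum_{j} v_{j} Q_{ji}$: in this basis $D_m f$ is represented by $PQ$. Finally pick $H^{-1} = PQR$, i.e. $f^{*} = (PQR)^{-1} f$; then $D_m f^{*}$ in the $\{w_{i}\}$ basis is $(PQR)^{-1} PQ = R^{-1}$, an orthogonal matrix. Since the $\{w_{i}\}$ are $g$-orthonormal and $R^{-1}$ preserves the Euclidean inner product, $D_m f^{*} v \cdot D_m f^{*} w = g_m(v,w)$, which is exactly the isometry condition of the Definition of isometric embedding; the freedom in $R$ reflects that isometric embeddings are only determined up to a rigid Euclidean motion.

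The main obstacle I anticipate is not the linear algebra at the fixed point but rather justifying the isometry \emph{on all of $\mathcal{A}$}, not merely at the single point $m$ where the eigendecomposition of $T_m\phi^{-1}$ was taken. Because $\phi$ is linear and $\mathcal{A}$ is $\phi$-invariant, the derivative cocycle is governed by the same matrix, and $f$ intertwines $\phi$ with the linear map $A$ on the reservoir side ($f\circ\phi = A\circ f$ up to the forcing term), so $D f$ at any other point of $\mathcal{A}$ is obtained from $D_m f$ by the linear dynamics; one has to check that the prescribed $P,Q,R$ still trivialize the pulled-back metric there. I would handle this by noting that a linear $\phi$ on $M$ forces $\mathcal{A}$ and its tangent bundle to carry a flat structure compatible with $g$ (or, more cautiously, restrict the isometry claim to the germ of $\mathcal{A}$ at the fixed point, which is the natural setting given the Hartman–Grobman motivation cited before the theorem) — spelling out the precise global hypothesis on $(M,g,\phi)$ under which "$\{w_{i}\}$ orthonormal" can be chosen consistently is the one place where the statement needs care. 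A secondary, purely technical point is confirming that $\sum_{k} A^{*k} C^{*} < \infty$ and the echo property transfer under conjugation, but this is automatic since $H$ is a fixed invertible matrix and summability is conjugation-invariant.
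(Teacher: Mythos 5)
Your proposal follows essentially the same route as the paper's proof: compute $D_m f$ on the eigenvectors $v_j$ to obtain the columns of $P$, pass to the $g$-orthonormal basis $\{w_i\}$ so that the derivative is represented by $PQ$, and conjugate by $H=(PQR)^{-1}$ so that $D_m f^*$ sends $w_i$ to $R^{-1}e_i$, from which the isometry identity $g_m(u,y)=D_m f^* u\cdot D_m f^* y$ follows immediately. Your closing concern about extending the pointwise computation to all of $\mathcal{A}$ is well taken but does not mark a divergence from the paper, whose own proof likewise verifies the condition only at the single point $m$, consistent with the local Hartman--Grobman setting invoked before the theorem.
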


\begin{proof}
To establish isometry we need to show that 
\begin{align*}
    g(u,y) = D_m f^*u \cdot D_m f^* y.
\end{align*}
So we start with
    \begin{align*}
    D_m f = \sum_{k = 0}^{\infty} A^k C D_{m} \omega T_{m} \phi^{-k}
\end{align*}
then observe that
\begin{align*}
D_m f w_i = PQ e_i 
\end{align*}
for canonical unit vector $e_i$.
Now introduce an arbitrary rotation matrix $R$, which represents the arbitrary choice of orthonormal basis with respect to $g$. Then

\begin{align*}
    A^* &= (PQR)^{-1} A P Q R \\
    C^* &= (PQR)^{-1} C.
\end{align*}

Thus

\begin{align*}
D_m f^* w_i = R^{-1} e_i.
\end{align*}

Now let

\begin{align*}
    u = \sum_{i=1}^q \alpha_i w_i, \qquad y = \sum_{i=1}^q \beta_i w_i
\end{align*}

for scalars $\{ \alpha_i \}_{i=1}^q$ and $\{ \beta_i \}_{i=1}^q$ and observe that

\begin{align*}
    g_m(u,y) = \sum_{i=1}^q \alpha_i \beta_i = D_m f^* u \cdot D_m f^* y.
\end{align*}
\end{proof}

\begin{remark}
In the case $N>q$, the choice of $S \in GL(N)$ completing the columns of $PQ$ is not unique. 
One may choose any $S$ such that 
\[
S e_i = (PQ) e_i, \quad i=1,\dots,q,
\]
and then for any block rotation $\overline{R} \in O(N)$ of the form described in the proof,
\[
    A^* = (S\overline{R})^{-1} A \, S\overline{R}, 
    \qquad 
    C^* = (S\overline{R})^{-1} C.
\]
\end{remark}

\begin{remark}
The full-rank condition on the matrix \(P\) (ensuring \(\mathrm{rank}(P)=q\)) is not restrictive in practice. Indeed, Proposition 4.4 in \cite{Grigoryeva2023} shows that when \(A\) and \(C\) are drawn randomly from any non-degenerate continuous distribution, the columns \((I-\lambda_j A)^{-1} C\) (for distinct \(\{\lambda_j\}\)) are almost surely linearly independent. Hence, the condition we impose holds with probability 1 in typical random reservoir initializations.
\end{remark}

\begin{remark}
It is notable that the isometric form constructed above does not depend on the specific values of 
the eigenvalues $\{\lambda_j\}$ beyond the necessary convergence conditions. 
In particular, \emph{isometry is achievable for any admissible set of eigenvalues}, as long as the 
rank condition on $P$ is met. This highlights that the spectral data of $T_m\phi^{-1}$ do not 
obstruct the construction of an isometric embedding.
\end{remark}

\section{Conclusion}

Generalized synchronization (GS) is the fundamental mechanism that allows 
reservoir systems to learn and represent dynamical systems. 
When the GS is an embedding, the reservoir state space provides a faithful 
reconstruction of the geometry of the underlying attractor, making possible the 
recovery of dynamical invariants and topological features. 
Our first main result shows that such embedding GS maps occur generically, 
formalizing what has been widely observed empirically in reservoir computing. 

Going further, we established that isometric embeddings can also arise in 
reservoir systems. Isometric GS maps not only preserve topology, but also the 
metric structure of the source dynamics, including angles and lengths. 
In the linear case, we showed that every reservoir system with embedding GS is isomorphic to a system with isometric GS. The isometry preserves the eigenvalues of the reservoir matrix, proving that many different spectra are consistent with an isometric GS. This reveals a stronger structural 
property of reservoirs: they can, in principle, represent the geometry of 
dynamical systems without distortion. Future work could investigate continuous-time analogues of these results and explore how isometric GS maps might improve learning, stability, and invariant discovery in practical reservoir computing applications.

\section*{Appendix}

\begin{lemma}
\label{lem::open_nonempty}
     Suppose that $\phi \in \mathrm{Diff}^2(M)$ has finitely many periodic orbits in $\mathcal{A}$. Suppose that for each periodic orbit of $m \in \mathcal{A}$ with period $n \in \mathbb{N}$, the derivative $T_m \phi^{-n}$ has $q$ distinct eigenvalues. Then for generic observation functions $\omega \in C^1(M,\mathbb{R})$ the set of generalised synchronisations $S \subset C^1(M,\mathbb{R}^N)$ is open and non-empty
\end{lemma}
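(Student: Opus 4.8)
The plan is to split the claim into its two halves — \emph{non-emptiness} and \emph{openness} — and to handle them by separate mechanisms. For non-emptiness, I would invoke a Takens-type embedding result for delay-coordinate maps. Under the stated hypotheses on $\phi \in \mathrm{Diff}^2(M)$ (finitely many periodic orbits in $\mathcal{A}$, and $T_m\phi^{-n}$ having $q$ distinct eigenvalues on each period-$n$ orbit), Takens' theorem (in Huke's formulation, restricted to the compact invariant set $\mathcal{A}$) guarantees that for a generic observation function $\omega \in C^1(M,\mathbb{R})$ the $N$-fold delay map $m \mapsto (\omega(m), \omega\phi^{-1}(m), \dots, \omega\phi^{-(N-1)}(m))$ is an embedding of $\mathcal{A}$, since $N > 2q$. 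I would then exhibit an explicit reservoir map $F$ — the linear shift register $F(x,z) = (z, x_1, \dots, x_{N-1})$, or a contracting variant $F(x,z) = (z, \lambda x_1, \dots, \lambda x_{N-1})$ with $|\lambda|<1$ to ensure the echo-state/uniform-attraction property — whose associated GS is precisely this delay map. Checking that the graph $G = \{(m, f(m))\}$ is $\Phi$-invariant and globally attracting for initial conditions in a suitable open $U$ is a routine computation with the shift structure. This shows $\mathcal{F}$ (hence $S$) is non-empty for generic $\omega$.

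For openness of $S \subset C^1(M,\mathbb{R}^N)$, the natural route is to push forward openness of $\mathcal{F}$ through the map $\Psi : \mathcal{F} \to S$ sending $F$ to its GS $f$. By definition $\mathcal{F}$ is the \emph{interior} of the set of reservoir maps admitting a GS, so $\mathcal{F}$ is open in $C^1(\mathbb{R}^N\times\mathbb{R},\mathbb{R})$ by construction. If I additionally know $\Psi$ is an open map (which is exactly the content of the later Lemma \ref{lem::open_and_cont}, available to me here), then $S = \Psi(\mathcal{F})$ is open. Alternatively — and this is cleaner if I want the lemma to stand on its own — I would argue openness directly via a persistence/robustness argument: a GS that arises as a uniformly attracting invariant graph over a compact set, with the contraction being uniform in the fibre direction, persists under $C^1$-small perturbations of $F$ by a standard graph-transform (Hadamard--Perron) fixed-point argument; the perturbed graph is $C^1$-close to the original, so it still lies in $C^1(M,\mathbb{R}^N)$ and the perturbed system still has a GS. This gives an open neighbourhood of each $f \in S$ inside $S$.

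The main obstacle is the openness argument, specifically ensuring that the attracting invariant graph survives perturbation \emph{as a $C^1$ object} and that the notion of ``has a GS'' is genuinely stable. The subtlety is that the Definition of GS only requires attraction from \emph{some} basepoint $m_0$ and initial states in an open $U$, not uniform hyperbolic attraction, so the bare definition may not be robust without an extra uniform-contraction hypothesis; the $\mathrm{interior}$ in the definition of $\mathcal{F}$ is doing real work here, and I would lean on it (together with Lemma \ref{lem::open_and_cont}) rather than trying to re-derive persistence from scratch. I would therefore present the proof as: (i) Takens $\Rightarrow$ a concrete shift-register $F$ with embedding-candidate GS $\Rightarrow$ $S$ non-empty for generic $\omega$; (ii) $\mathcal{F}$ open by definition, $\Psi$ open by Lemma \ref{lem::open_and_cont}, hence $S = \Psi(\mathcal{F})$ open. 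The one genuinely delicate check is verifying the echo-state/attraction condition for the chosen $F$ so that it actually lies in $\mathcal{F}$ rather than merely on its boundary — the contracting variant with $|\lambda|<1$ is what makes this go through.
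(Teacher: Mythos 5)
Your non-emptiness argument coincides with the paper's: the linear shift-register reservoir map whose GS is the Takens delay map, which is an embedding of $\mathcal{A}$ for generic $\omega$ under the stated hypotheses on $\phi$. Your extra care about the attraction property (the contracting variant with $|\lambda|<1$) is a reasonable refinement, though the nilpotent shift already reaches the graph exactly after $N$ steps and its spectral radius $0$ persists under perturbation, so it does lie in the interior $\mathcal{F}$.

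The openness half has a genuine gap, and it lies in the direction of your argument. What must be shown is that for every $f \in S$, \emph{every} $C^1$-small perturbation $f'$ of $f$ again lies in $S$, i.e.\ is the GS of \emph{some} reservoir map $F'$. Both of your routes establish something weaker. Route (ii) is circular: Lemma \ref{lem::open_and_cont} asserts that $\Psi:\mathcal{F}\to S$ is open \emph{as a map into $S$ with the subspace topology} (its proof factors $\Psi$ through a homeomorphism onto $S$), so ``$\Psi(\mathcal{F})=S$ is open'' only says $S$ is open in itself; to conclude that $S$ is open in $C^1(M,\mathbb{R}^N)$ you would need $\Psi$ to be open into the ambient space, which is essentially the statement under proof. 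Your alternative persistence/graph-transform argument shows that reservoir maps near $F$ have GS's $C^1$-close to $f$ --- i.e.\ it concerns the stability of the set of admissible $F$'s and the continuity of $\Psi$ --- but it does not show that the image $\Psi(\mathcal{F})$ fills out a whole neighbourhood of $f$; the perturbed graphs obtained this way could a priori form a thin, non-open subset of that neighbourhood. The paper's proof runs in the reverse direction: given an arbitrary perturbed candidate $f'$, it \emph{constructs} a reservoir map $F'$ realizing $f'$ as a GS, by prescribing $F'(f'(m),\omega\phi(m)) = f'\phi(m)$ on the graph of $f'$, making $DF'$ contract the normal directions (eigenvalue $\tfrac12$), and extending smoothly off the graph. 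That reverse construction (which needs $f'$ to be injective on $\mathcal{A}$ for well-definedness, available here because the base point is an embedding and embeddings of compact sets are $C^1$-stable) is the ingredient missing from your proposal.
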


\begin{proof}
    We let
    \begin{align*}
        F(x,z) = Ax + Cz
    \end{align*}
    where
    \begin{align*}
        A = 
        \begin{pmatrix}
            0 & 0 & \ldots & 0 & 0 \\
            1 & 0 & \ldots & 0 & 0 \\
            0 & 1 & \ldots & 0 & 0 \\
            \vdots & \vdots & \ddots & \vdots & \vdots & \\
            0 & 0 & \ldots & 1 & 0
        \end{pmatrix},
        \qquad C = 
        \begin{pmatrix}
            1 \\
            0 \\
            \vdots \\
            0
        \end{pmatrix}.
    \end{align*}
    so that
    \begin{align*}
        f(m) = 
        \begin{pmatrix}
            \omega\phi^{N-1}(m) \\
            \omega\phi^{N-2}(m) \\
            \vdots \\
            \omega(m)
        \end{pmatrix}.
    \end{align*}
    This is the Takens delay map which is an embedding GS under the specified conditions on $\phi$ and for generic $\omega \in C^1(M,\mathbb{R}^N)$. This establishes non-emptiness. 
    Now for any small perturbation of $f$ denoted $f'$ define a reservoir map $F' \in C^1(\mathbb{R}^N \times \mathbb{R} , \mathbb{R}^N)$ that satisfies
    \begin{align*}
        F'(f'(m),\omega\phi(m)) = f'\phi(m) \qquad \forall m \in \mathcal{A}
    \end{align*}
    and 
    \begin{align*}
        DF'(f'(m),\omega\phi(m))v = \frac{1}{2} v \qquad \forall m \in \mathcal{A}
    \end{align*}
    for $v$ in the normal space. Then we can smoothly continue $F'(x,z)$ on the remaining $(x,z) \in \mathbb{R}^N \times \mathbb{R}$. Then $f' \in S$. This establishes openness. 
\end{proof}

\begin{lemma}
\label{lem::topology}
    If $S$ is open and nonempty and $E$ is dense and open (all in a topological space $X$), then
    $S \cap E$ is a dense open subset of $S$ (with the subspace topology).
\end{lemma}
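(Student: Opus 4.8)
The plan is to verify the two defining properties separately: that $S\cap E$ is open in the subspace topology on $S$, and that $S\cap E$ is dense in $S$.

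Openness is immediate. Since $E$ is open in $X$, the set $S\cap E$ is open in the subspace topology on $S$ directly from the definition of that topology; in fact, since $S$ is also open in $X$, the intersection $S\cap E$ is open in $X$ as well, which is stronger than needed.

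For density, the key move is to exploit the hypothesis that $S$ itself is open in $X$. A nonempty relatively open subset $V\subseteq S$ has the form $V = W\cap S$ for some open $W\subseteq X$; because $S$ is open in $X$, this $V$ is then a nonempty \emph{open} subset of $X$. Density of $E$ in $X$ now gives $V\cap E\neq\emptyset$, and since $V\subseteq S$ we have $\emptyset \neq V\cap E \subseteq S\cap E$, so $V$ meets $S\cap E$. As $V$ ranged over all nonempty relatively open subsets of $S$, this shows $S\cap E$ is dense in $S$, completing the argument.

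The only point requiring care — and it is genuinely mild — is bookkeeping about which ambient topology the word ``dense'' refers to: the assumption that $S$ is open is precisely what lets us upgrade a relatively open subset of $S$ to an honest open subset of $X$, which is where density of $E$ in $X$ can be applied. I do not anticipate any real obstacle; this is a routine point-set fact, included so that the proof of Theorem~\ref{thm::generic_F_embedding_f} is self-contained.
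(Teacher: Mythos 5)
Your proof is correct and follows essentially the same route as the paper's: openness is handled identically, and your density argument (every nonempty relatively open $V\subseteq S$ is open in $X$ because $S$ is, hence meets $E$, hence meets $S\cap E$) is just the neighborhood-characterization version of the paper's closure-identity computation $\overline{S\cap E}^{\,S}=S\cap\overline{E}^{\,X}=S$. If anything, your formulation makes explicit where openness of $S$ enters, which the paper's appeal to the subspace-closure identity leaves implicit.
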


\begin{proof}
\emph{Openness.} Since $S$ and $E$ are open in $X$, their intersection $S\cap E$ is open in $X$, hence open in $S$ with the subspace topology.

\emph{Density.} Using the subspace–closure identity
\[
\overline{S \cap A}^{\,S}= S \cap \overline{A}^{\,X} \qquad(\text{for any }A\subset X),
\]
which is Proposition 16.4 in \cite{Munkres2000Topology}, we have
\[
\overline{S\cap E}^{\,S} \;=\; S \cap \overline{E}^{\,X}.
\]
Because $E$ is dense in $X$, $\overline{E}^{\,X}=X$, so $\overline{S\cap E}^{\,S}=S$. Hence $S\cap E$ is dense in $S$.
\end{proof}

\begin{lemma}
    \label{lem::open_and_cont}
    $\Psi : \mathcal{F} \to S$ is open and continuous.
\end{lemma}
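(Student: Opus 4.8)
The plan is to realize the generalized synchronization as the fixed point of a functional operator that depends on $F$, obtain continuity from the uniform contraction principle (equivalently, the Banach implicit function theorem), and obtain openness from an explicit backward construction in the spirit of Lemma~\ref{lem::open_nonempty}.

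For continuity, note that $\Phi(G)=G$ is equivalent to the functional equation
\begin{align*}
    f\circ\phi = F(f,\omega) \qquad\text{on }\mathcal{A},
\end{align*}
so $f=\Psi(F)$ is a fixed point of the operator $T_F : g \mapsto F\big(g\circ\phi^{-1},\ \omega\circ\phi^{-1}\big)$ acting on $C^1$ maps near $f$. Since $\mathcal{F}$ is the \emph{interior} of the set of reservoir maps admitting an attracting GS, along the graph $G$ the fibre map $x\mapsto F(x,z)$ must contract uniformly, $\sup\|D_xF\|\le\theta<1$ on a neighbourhood of $G$ — this uniform transverse hyperbolicity is exactly what survives perturbation, so it is available on the open set $\mathcal{F}$ (and it also makes the GS unique, so $\Psi$ is genuinely single-valued). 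Then $T_F$ is a $\theta$-contraction, $\Psi(F)$ is locally unique, and subtracting the fixed-point equations for $F$ and $F'$ while using $\|(f'-f)\circ\phi\|_\infty=\|f'-f\|_\infty$ yields $\|f'-f\|_{C^0}\le(1-\theta)^{-1}\|F'-F\|_{C^0}$, i.e.\ $C^0$-continuity; differentiating the fixed-point equation and running the analogous estimate (now also using $\sup_{\mathcal{A}}\|T_m\phi^{-1}\|<\infty$, since $\mathcal{A}$ is compact and $\phi\in\mathrm{Diff}^2$) upgrades this to continuity in the $C^1$ topology. Equivalently, $\Xi(F,g):=g\circ\phi-F(g,\omega)$ has partial derivative $D_g\Xi(F,f)[h]=h\circ\phi-(D_xF)(f,\omega)\,h$ invertible by the same uniform contraction, and the implicit function theorem produces a $C^1$ solution branch $F\mapsto f$.

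For openness, fix $F_0\in\mathcal{F}$ with GS $f_0$ and a neighbourhood $V$ of $F_0$; I must show $\Psi(V)$ contains a $C^1$-neighbourhood of $f_0$ in $S$. Given $f\in S$ close to $f_0$: because $f$ is a GS of some reservoir map, the rule $(f(m),\omega(m))\mapsto f\phi(m)$ is consistently defined on $\{(f(m),\omega(m)):m\in\mathcal{A}\}$, so exactly as in Lemma~\ref{lem::open_nonempty} one may prescribe $F(f(m),\omega(m))=f\phi(m)$ with $D_xF=\tfrac{1}{2}I$ in the normal directions along this set and then extend smoothly to all of $\mathbb{R}^N\times\mathbb{R}$. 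Since $f$ is $C^1$-close to $f_0$ and $F_0$ already (nearly) satisfies this relation along the graph of $f_0$, the extension can be chosen to agree with $F_0$ outside a thin tube around the graph and to differ from $F_0$ by an arbitrarily small $C^1$ amount; the built-in transverse contraction keeps $f$ attracting, so $F\in\mathcal{F}\cap V$ and $\Psi(F)=f$. Hence $f\in\Psi(V)$, proving $\Psi$ open.

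The main obstacle is the continuity step — extracting a genuinely \emph{uniform} transverse contraction $\theta<1$ (and the sharper bound needed to close the $C^1$, not merely $C^0$, estimate) from the bare hypothesis $F\in\mathcal{F}$, since the GS definition only posits pointwise attraction of a single orbit; one must argue that robustness under perturbation (membership in the interior) forces uniform hyperbolicity of the fibre dynamics normal to $G$, and that the $C^2$ regularity of $\phi$ is enough to push the estimate through in the $C^1$ topology.
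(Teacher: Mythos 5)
Your route is genuinely different from the paper's — the paper factors $\Psi$ through the quotient $\mathcal{F}/\!\sim$ and asserts the induced map $\chi:\mathcal{F}/\!\sim\;\to S$ is a homeomorphism "by construction", whereas you attempt an honest analytic argument via a fixed-point/implicit-function formulation. But the load-bearing step of your argument is missing, and you say so yourself: the uniform transverse contraction $\sup\|D_xF\|\le\theta<1$ near the graph is nowhere derived from the hypothesis $F\in\mathcal{F}$. The paper's definition of GS only requires invariance of the graph together with attraction of the orbit through a \emph{single} base point $m_0$, and membership in the interior of the set of maps admitting a GS does not obviously upgrade this to uniform normal hyperbolicity: robust existence of an attracting invariant graph is strictly weaker than a uniform fibre contraction (non-uniformly attracting normal dynamics can also persist under perturbation). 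Without that $\theta$, the operator $T_F$ is not a contraction, $D_g\Xi(F,f)$ is not invertible, the GS need not be unique (the paper itself stresses non-uniqueness, so even the single-valuedness of $\Psi$ is at stake), and both the $C^0$ and $C^1$ estimates collapse. Flagging the obstacle does not discharge it; as written, the continuity half is a conditional statement whose hypothesis is unverified.

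The openness half has a second, independent defect. Prescribing $D_xF=\tfrac12 I$ in the directions normal to the graph of $f$ is in general a \emph{large} $C^1$ perturbation of $F_0$, because $D_xF_0$ along the graph of $f_0$ need not be anywhere near $\tfrac12 I$; so the constructed $F$ need not lie in the given neighbourhood $V$, and the claim that the modification "differ[s] from $F_0$ by an arbitrarily small $C^1$ amount" is false for this prescription. The natural repair — keep $D_xF$ close to $D_xF_0$ and only correct the values along the new graph by a bump-function interpolation — makes the perturbation small, but then you must show that $F_0$'s own fibre dynamics attract the perturbed graph, which is exactly the unestablished uniform hyperbolicity from the first paragraph. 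To be fair, the paper's proof does not fill these holes either (openness of the quotient map and the homeomorphism claim for $\chi$ are asserted, not proved), so your approach is not wrong in spirit; but it does not yet constitute a proof.
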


\begin{proof}
    First we define an equivalence relation $\sim$ on $\mathcal{F}$ as $F \sim F'$ if $F, F' \in \mathcal{F}$ have generalised synchronisations that are equal on $\mathcal{A} \subset M$.
    Now the quotient map $\pi : \mathcal{F} \to \mathcal{F} / \sim$ is open, continuous (in the quotient topology) and surjective. Now we define the map $\chi : (\mathcal{F} / \sim) \to S$ as mapping the equivalence class $[F] \in \mathcal{F} / \sim$ to the common GS $f \in S \subset C^1(M,\mathbb{R}^N)$. Now we observe that $\Psi = \chi \circ \pi$. Now using that $\chi : (\mathcal{F} / \sim) \to S$ is a homeomorphism by construction, is follows the composition $\Psi = \chi \circ \pi$ is open and continuous which completes the proof.
\end{proof}

\begin{lemma}
\label{lem::topology2}
Let $X,B$ be topological spaces and let $\Psi:X\to B$ be continuous and open.
If $A\subset B$ is comeager (generic), then $\Psi^{-1}(A)$ is comeager (generic) in $X$.
In particular, if $X$ is a Baire space, then $\Psi^{-1}(A)$ is dense in $X$.
\end{lemma}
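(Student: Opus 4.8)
The plan is to unwind the definition of comeager in terms of countable intersections of dense open sets, and then to reduce the whole statement to a single elementary fact: the preimage of a dense open set under a continuous open map is again dense open.

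First I would recall that $A \subset B$ being comeager means $B \setminus A$ is meager, i.e.\ $B \setminus A \subset \bigcup_{n \in \mathbb{N}} N_n$ for some nowhere dense sets $N_n \subset B$. Setting $U_n := B \setminus \overline{N_n}$, each $U_n$ is open (being the complement of a closed set) and dense (since $N_n$ nowhere dense means $\overline{N_n}$ has empty interior), and $A \supset \bigcap_n U_n$. So it suffices to handle the case of a countable intersection of dense open sets, and indeed to understand a single dense open set.

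The key step is the claim that for any dense open $U \subset B$ the set $\Psi^{-1}(U) \subset X$ is dense and open. Openness is immediate from continuity of $\Psi$. For density I would take an arbitrary nonempty open $V \subset X$ and show $V \cap \Psi^{-1}(U) \neq \emptyset$: since $\Psi$ is an open map, $\Psi(V)$ is open in $B$, and it is nonempty because $V$ is; density of $U$ then yields a point $b \in \Psi(V) \cap U$, and any $x \in V$ with $\Psi(x) = b$ lies in $V \cap \Psi^{-1}(U)$. Applying this to each $U_n$, the sets $\Psi^{-1}(U_n)$ are dense open in $X$, so $\Psi^{-1}(A) \supset \bigcap_n \Psi^{-1}(U_n)$ is a comeager subset of $X$, which is the first assertion. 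Finally, if $X$ is a Baire space then by definition every countable intersection of dense open sets is dense, so $\bigcap_n \Psi^{-1}(U_n)$ — and hence the larger set $\Psi^{-1}(A)$ — is dense in $X$.

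I do not expect any serious obstacle; the only point requiring care is the density half of the key step, which genuinely uses that $\Psi$ is an \emph{open} map (continuity alone would fail — e.g.\ the preimage of a dense set under a constant map can be empty). I would also check that the argument nowhere assumes $\Psi$ is surjective, since in the intended application $\Psi : \mathcal{F} \to S$ does map onto $S$, but the lemma should not need this.
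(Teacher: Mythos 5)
Your proof is correct and follows essentially the same route as the paper's: reduce to the claim that the preimage of a dense open set under a continuous open map is dense open, using openness of $\Psi$ exactly where you indicate. Your opening step is in fact slightly more careful than the paper's, since you only claim $A \supset \bigcap_n U_n$ rather than equality, which is the correct unwinding of ``comeager.''
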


\begin{proof}
Write $A=\bigcap_{n=1}^\infty U_n$ with each $U_n$ open and dense in $B$.
By continuity, $\Psi^{-1}(U_n)$ is open in $X$ for each $n$.
To see density, let $V\subset X$ be nonempty and open.
Since $\Psi$ is open, $\Psi(V)$ is open in $B$, hence $\Psi(V)\cap U_n\neq\varnothing$ (as $U_n$ is dense).
Pick $b\in \Psi(V)\cap U_n$; then there exists $x\in V$ with $\Psi(x)=b\in U_n$, so $x\in V\cap \Psi^{-1}(U_n)$.
Thus every nonempty open $V$ meets $\Psi^{-1}(U_n)$, i.e. $\Psi^{-1}(U_n)$ is dense in $X$.
Therefore $\Psi^{-1}(A)=\bigcap_{n=1}^\infty \Psi^{-1}(U_n)$ is comeager in $X$.
If $X$ is Baire, every comeager subset is dense, proving the last claim.
\end{proof}

\bibliographystyle{unsrt}
\bibliography{references}

\end{document}